\numberwithin{equation}{section}
\theoremstyle{plain}
\newtheorem{theorem}[subsection]{Theorem}
\newtheorem{lemma}[subsection]{Lemma}
\theoremstyle{definition}
\renewcommand{\leq}{\leqslant}
\renewcommand{\geq}{\geqslant}
\newsavebox{\proofbox}
\savebox{\proofbox}{\begin{picture}(7,7)%
  \put(0,0){\framebox(7,7){}}\end{picture}}
\def\E{\mathbb{E}}
\def\T{\mathbb{T}}
\def\C{\mathbb{C}}
\def\F{\mathbb{F}}
\begin{document}

\title{A new proof of Roth's theorem on arithmetic progressions
}

\author{Ernie Croot}
\address{Department of Mathematics\\
     Georgia Institute of Technology \\
     103 Skiles\\
     Atlanta, Georgia 30332 \\
     U.S.A.}
\email{ecroot@math.gatech.edu}
\author{Olof Sisask}
\address{Department of Mathematics\\
     University of Bristol\\
     Bristol BS8 1TW\\
     England}
\email{O.Sisask@dpmms.cam.ac.uk}

\thanks{The first author is funded by NSF grant 
DMS-0500863. The second author is funded by an 
EPSRC DTG through the University of Bristol, and 
would like to thank the University of Cambridge for
its kind hospitality while this work was carried out.}

\begin{abstract}
We present a proof of Roth's theorem that follows a slightly different 
structure to the usual proofs, in that there is not much iteration.
Although our proof works using a type of density 
increment argument (which is typical of most proofs of Roth's theorem), 
we do not pass to a progression related to the large Fourier coefficients
of our set (as most other proofs of Roth do).  Furthermore, in our proof, 
the density increment is achieved through an application of a quantitative 
version of Varnavides's theorem, which is perhaps unexpected. 
\end{abstract}
\maketitle
     \section{Introduction}
Given an integer $N \geq 1$, let $r_3(N)$ denote the size of any largest
subset $S$ of $[N] := \{1,...,N\}$ for which there are no solutions to
$$
x + y = 2z,\ x,y,z \in S,\ x \neq y;
$$
in other words, $S$ has no non-trivial three term arithmetic progressions.

In the present paper we give a proof of Roth's theorem \cite{roth}
that, although iterative, uses 
a more benign type of iteration than most proofs. 

\begin{theorem}\label{main_theorem}  We have that $r_3(N) = o(N)$.
\end{theorem}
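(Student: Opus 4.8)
The plan is to argue by contradiction, and to extract from the failure of Roth's theorem a quantitative counting statement that one can then turn against itself. Suppose $r_3(N) \neq o(N)$, so that $\alpha := \limsup_{N\to\infty} r_3(N)/N$ is strictly positive, and fix a small $\epsilon \in (0,\alpha)$. On the one hand, by definition of $\alpha$ there is an $M_0$ with $r_3(M) < (\alpha+\epsilon)M$ for all $M \geq M_0$; the classical averaging argument of Varnavides (applied with this $M_0$ in place of an appeal to Roth's theorem) then upgrades this to a genuinely quantitative form: there exist $c = c(\epsilon) > 0$ and $M_1 \geq M_0$ such that every $B \subseteq [K]$ with $|B| \geq (\alpha + 2\epsilon)K$ and $K \geq M_1$ contains at least $cK^2$ non-trivial three-term progressions. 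On the other hand, for infinitely many $N$ there is a progression-free set $A \subseteq [N]$ with $|A| \geq (\alpha-\epsilon)N$; it suffices to derive a contradiction from the existence of such an $A$ with $N$ large.

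The core of the argument is then the production of a single density increment for such an $A$. Working with the Fourier transform on $[N]$ (or on $\Z/N\Z$ after a harmless passage), one has the usual identity: the number of three-term progressions in $A$ is $(\alpha')^3 N^2$ plus an error term controlled by the non-principal Fourier coefficients of $1_A$, where $\alpha'$ is the density of $A$. Since $A$ has only trivial progressions, the error term must very nearly cancel the main term, and this forces $1_A$ to correlate substantially with a dilate of the self-convolution $1_A * 1_A$. At this point, rather than isolating a single large Fourier coefficient and restricting $A$ to the long progression that frequency determines --- the standard move, which one then has to iterate --- I would exploit the fact that $1_A * 1_A$, being a convolution, is \emph{almost periodic}: it is nearly invariant, in an $L^p$ sense, under translations by a large and structured set of shifts. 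Feeding this almost-periodic structure (rather than any individual frequency) into the picture, I would locate a genuine arithmetic progression $P$, with $|P| \to \infty$ as $N \to \infty$ and $|P| \geq M_1$ once $N$ is large, on which the density of $A$ exceeds $\alpha + 2\epsilon$. It is in extracting this density increment from the almost-periodic structure that the quantitative Varnavides bound is meant to be used --- a progression on which $A$ behaved too uniformly would already be forced to carry non-trivial progressions --- which is the (perhaps unexpected) sense in which the increment is achieved \emph{through} Varnavides rather than through a Fourier-selected progression.

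Once $P$ is in hand, the contradiction is immediate. Transporting $A \cap P$ to $[\,|P|\,]$ via the affine identification of $P$ with an initial segment, the image is still free of non-trivial three-term progressions (it is a subset of the progression-free set $A$, and sub-progressions of $P$ are genuine progressions), yet it has density at least $\alpha + 2\epsilon$ with $|P| \geq M_1$; by the quantitative form of Varnavides's theorem it must contain $\gtrsim |P|^2$ non-trivial three-term progressions, which is absurd. Note that the classical density-increment proof would now rescale and repeat $\Theta(1/\alpha)$ times until the density exceeds $1$; here Varnavides converts the \emph{one} increment straight into a contradiction, so the only iteration present is whatever is buried inside the almost-periodicity estimates --- the ``benign'' iteration of the abstract.

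The step I expect to be the main obstacle is the middle one: converting the correlation of $1_A$ with the self-convolution into an honest density increment on a \emph{genuine} arithmetic progression whose length still grows with $N$ and stays above the Varnavides threshold $M_1$. The classical Fourier argument yields a density increment only on a Bohr set, after which one must either establish uniformity inside it or recurse; forcing the increment onto a bona fide progression in one stroke requires the almost-periodicity estimates for convolutions to be carried through with explicit, usable control --- on the size of the set of almost-periods, on the length of the progression that set contains, and on how the smoothness of $1_A * 1_A$ transfers to density information about $A$. Getting that quantitative bookkeeping to line up with the threshold $M_1$ is where the real work lies.
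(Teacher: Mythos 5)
Your proposal captures one thread of the paper's philosophy---use Varnavides to avoid the usual density-increment iteration---but the mechanism you describe is genuinely different from what the paper does, and it contains a gap that you correctly identify but do not fill.

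The paper does \emph{not} seek a subprogression $P$ of $[N]$ on which $A$ has elevated density. Instead it manufactures a \emph{new} subset of $[N]$, larger than $A$, which nonetheless has few three-term progressions. Concretely: one picks a single three-term progression $B=\{0,x,2x\}$ with $x$ chosen (via Dirichlet's box principle) so that $e(xr/p)\approx 1$ on the large spectrum of $1_A$, convolves $1_A$ with the uniform probability measure on $B$, and lets $T'$ be the support of the convolution inside $[N]$. The Fourier control on $x$ guarantees $\Lambda(T')$ is tiny, while an entirely elementary pointwise bound---$(1_A*\mu_B)(n)\leq 2/3$ for every $n$, since $A$ is progression-free and $B$ is a $3$-AP---gives $|T'|\geq\tfrac{4}{3}|A|$. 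Varnavides is then applied \emph{to $T'$ itself}: since $T'$ has few $3$-APs in $[N]$, its density must be bounded in terms of $r_3(M)/M$ for a much smaller scale $M\approx(\log N)^{1/16}$, yielding $r_3(N)/N\leq\tfrac{3}{4}\,r_3(M)/M+o(1)$ and hence $\kappa=0$. There is no density increment of $A$ on a subprogression and no almost-periodicity of $1_A*1_A$ anywhere in the argument.

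By contrast, your middle step---``locate a genuine arithmetic progression $P$, with $|P|\to\infty$, on which the density of $A$ exceeds $\alpha+2\epsilon$'' by way of almost-periodicity of $1_A*1_A$---is precisely the part you flag as the main obstacle, and as stated it is not a proof: you would need explicit bounds on the set of $L^p$-almost-periods, on the length of the progression found inside it, and on how smoothness of $1_A*1_A$ feeds back into a density statement about $A$, and none of this is carried out. (This resembles the later Croot--Sisask almost-periodicity technology rather than the present paper.) Note also that if you \emph{did} obtain such a $P$ with $|P|\geq M_0$ and density of $A$ on $P$ at least $\alpha+2\epsilon$, the contradiction is immediate from the definition of $\alpha=\limsup r_3(N)/N$: transporting $A\cap P$ to $[\,|P|\,]$ gives a progression-free set of density $>\alpha+\epsilon$, contradicting $r_3(|P|)<(\alpha+\epsilon)|P|$. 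You would not need Varnavides at that final step at all, so your placement of Varnavides is also different from the paper's, where it is the engine converting ``few $3$-APs in $T'$'' into ``$T'$ is not too dense relative to scale $M$.'' In short: the paper's key trick is the elementary $4/3$ expansion of the support of a convolution with a short progression, combined with Varnavides applied to the \emph{enlarged} set; your proposal replaces this with an unconstructed density increment on a subprogression of the \emph{original} set, which is both a different route and an incomplete one.
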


Roughly, we achieve this by showing that $r_3(N)/N$ is 
asymptotically decreasing.  We will do this by starting with a set 
$S \subseteq [N]$, $|S| = r_3(N)$, such that $S$ has no three term
progressions, and then convolving it with a measure on a carefully
chosen three term arithmetic progression $\{0,x,2x\}$.  The set 
$T$ where this convolution is positive will be significantly larger
than $S$, yet will have very few three term arithmetic progressions. 
We will thus be able to deduce, using a quantitative
version of a theorem of Varnavides \cite{varnavides}, that $r_3(N)/N$ 
is much smaller than $r_3(M)/M$ for some $M = (\log N)^{1/16-o(1)}$. 
It is easy to see that this implies that $r_3(N) = o(N)$. 
Alas, the upper bound that our method will produce
for $r_3(N)$ is quite poor, and is of the quality $r_3(N) \ll N/\log_*(N)$,
which nontheless is the sort of bounds produced by the ``triangle-deletion''
proof of Roth's theorem \cite{ruzsa}. 

Many of the other proofs of Roth's theorem, in particular 
\cite{bourgain}, \cite{heathbrown}, \cite{szemeredi}, and
\cite{szemeredi2}, make use of
similar convolution ideas\footnote{In the case of Szemeredi's 
argument \cite{szemeredi2}, the convolution is disguised, but after the
dust has settled, one will see that he convolves with a measure on a 
very long arithmetic progression.  In the case of \cite{heathbrown} and 
\cite{szemeredi}, the arguments can be directly expressed in terms 
of convolution with a measure supported on a long arithmetic progression.}
; however, none of these 
methods convolve with such a short progression as ours 
(three terms only), and none use the result of Varnavides to achieve a density 
increment.  Furthermore, it seems that our method can be generalized
to any context where: (1) the number of three term progressions in a set 
depends only on a small number of Fourier coefficients; and, (2) 
one has a quantitative version of Varnavides's theorem.  This might prove
especially useful in certain contexts, because the particular
sets on which our method achieves a density increment (via Varnavides) 
are unrelated to the particular additive characters where the 
Fourier transform of $S$ is ``large''\footnote{That is, 
the progression to which we pass with each iteration is unrelated to
the additive characters where $\widehat{1_S}$ is ``large''.}.

\section{Notation}

We shall require a modicum of notation: 
given a function $f : \F_p \to [0,1]$, we write
\[ \Lambda(f) := \E_{x,d \in \F_p} f(x)f(x+d)f(x+2d) \]
(where $\E$ represents an averaged sum; thus the $\E$ above represents $p^{-2}\sum$).
Thus $\Lambda$ gives an average of $f$ over three term arithmetic 
progressions; when $f$ is the indicator function of a set $A$, this is 
just the number of progressions in $A$ divided by $p^2$. We shall make 
use of the Fourier transform $\widehat{f} : \F_p \to \C$ of a function 
$f$, given by
\[ \widehat{f}(r) := \E_{x \in \F_p} f(x) e^{2\pi i r x/p}, \]
as well as the easily-verified Parseval's identity 
\[ \sum_{r \in \F_p} |\widehat{f}(r)|^2 = \E_x |f(x)|^2. \]
It is also easy to check that
\begin{align}
\Lambda(f) = \sum_{r \in \F_p} \widehat{f}(r)^2 \widehat{f}(-2r). 
\label{lambdaFormula}
\end{align}
Given a set $T \subseteq \F_p$, we shall furthermore use the notation
$$
\Lambda(T)\ :=\ \Lambda(1_T).
$$
Finally, the notation $\lVert t \rVert_\T$ will be used to denote the 
distance from $t$ to the nearest integer.

\section{Proof of Theorem \ref{main_theorem}}
Let 
\[ \kappa := \limsup_{N \to \infty} r_3(N)/N. \]
We shall show that $\kappa = 0$, which will prove the theorem.

Let $N \geq 2$ be an integer, and then let $p$ be a prime number satisfying
$$
2N < p < 4N.
$$
The fact that such a $p$ exists is of course the content of Bertrand's postulate.

Let $S \subset [N]$ be a set free of three term progressions 
with $|S| = r_3(N)$. 
Thinking of $S$ as a subset of $\F_p$ in the obvious way, we shall 
write $f = 1_S : \F_p \to \{0,1\}$ for the indicator function of $S$. 
Let 
$$ 
R := \{ r \in \F_p : | \widehat{f}(r) | \geq (2\log\log p/\log p)^{1/2} \}.
$$ 
By Parseval's identity, this set of large Fourier coefficients cannot be 
too big; certainly,
$$
|R|\ \leq\ \log p / 2\log\log p. 
$$
We may therefore dilate these points of $R$ to be contained in a short part of 
$\F_p$. Indeed, by Dirichlet's box principle there is an integer dilate $x$
satisfying 
$$
0 < x < p^{1-1/(|R|+1)} \leq p/\log p,
$$
such that for all $r \in R$ we have
\begin{align}
\lVert x r/p \rVert_{\T} \leq p^{-1/(|R|+1)} \leq 1/\log p. \label{dilateBound}
\end{align}
Taking such an $x$, define 
$$
B := \{ 0,x,2x\},
$$
and define $h$ to be the normalised indicator function for $B$, given by
$$
h(n) := p 1_B(n)/3.
$$
Then convolve $f$ with $h$ to produce the new function
$$
g(n) := (f * h)(n) = (f(n) + f(n-x) + f(n-2x))/3. 
$$
Since 
$$
\widehat{f}(r) - \widehat{g}(r)\ =\ \widehat{f}(r)(1-\widehat{h}(r)),
$$
it is easy to check using \eqref{dilateBound} that for all $r \in \F_p$
$$
|\widehat{f}(r) - \widehat{g}(r)|\ \ll\ (\log\log p/\log p)^{1/2}.
$$
From this, along with the Cauchy-Schwarz inequality, Parseval's identity,
and equation \eqref{lambdaFormula}, one can quickly deduce that
$$
  | \Lambda(f) - \Lambda(g) | \ll (\log\log p/\log p)^{1/2}, 
$$
and therefore since $\Lambda(f) \ll 1/p$ (because $S$ is free of three
term arithmetic progressions), we deduce
\begin{align}
\Lambda(g) \ll (\log\log p/\log p)^{1/2}. \label{LambdaFG}
\end{align}

Define 
$$
T := \{ n \in \F_p : g(n) > 0 \},
$$
and note that from (\ref{LambdaFG}), along with the obvious fact 
that $\Lambda(T) \ll \Lambda(g)$, we have
\begin{align}
\Lambda(T) \ll (\log\log p/\log p)^{1/2}. \label{LambdaT}
\end{align}
Furthermore, since $S$ is free of three term progressions 
even in $\F_p$, we must have that $g(n) \leq 2/3$ for 
all $n \in \F_p$. Thus $1_T(n) \geq 3 g(n)/2$ for all $n$, 
immediately implying that 
$|T| \geq 3|S|/2$. The set $T$ would thus serve our 
purposes if it  was not for the fact that it is not 
necessarily contained in $[N]$. However,
since $x \leq p/\log p$, we certainly have the inclusion 
$T \subset [N + 2p/\log p]$.  So, if we let 
$T'$ be those elements of $T$ lying in $[N]$, then 
$$
|T'| = |T| - O(N/\log N)\ {\rm and\ } \Lambda(T') \leq \Lambda(T). 
$$
Hence, for $N$ large enough, $$|T'| \geq 4|S|/3$$ (unless of course 
$r_3(N) = O(N/\log N)$, but then we would be happy anyway).

We have now created a set $T'$, significantly larger than  
$S$, but with only a few more three term progressions. 
The following lemma, a quantitative version of 
Varnavides's theorem, will help us make use of this information. 
The notation $T_3(X)$ denotes the number of three 
term progressions $a, a+d, a+2d$ with $d \geq 1$ in a set $X$ of 
integers.
\begin{lemma}\label{quantitative_varnavides}
For any $1 \leq M \leq N$, and for any set $A \subseteq [N]$, we have 
\begin{align*} T_3(A) \geq \left( \frac{|A|/N-(r_3(M)+1)/M}{M^4} \right)N^2. \end{align*} 
\end{lemma}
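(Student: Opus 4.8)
The plan is to run a Varnavides-style double count, but — crucially — over a family of \emph{pairwise disjoint} $M$-term progressions rather than over all $M$-term progressions in $[N]$. First I would dispose of the trivial case in which the right-hand side is $\le 0$, so that from now on $|A|/N > (r_3(M)+1)/M$. The basic object: for each common difference $d$ with $1 \le d \le D$, where $D$ is an integer of size about $N/M^2$ to be fixed later, split $[N]$ into its $d$ residue classes modulo $d$; each such class is a genuine arithmetic progression of length at least $\lfloor N/d\rfloor \ge M$, which I chop into consecutive blocks of exactly $M$ terms, discarding a remainder of fewer than $M$ terms per class. This produces a collection $\mathcal{Q}_d$ of pairwise disjoint $M$-term progressions with common difference $d$ covering all of $[N]$ except a ``leftover'' set of size less than $dM$. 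The point of using disjoint blocks is that $\sum_{Q \in \mathcal{Q}_d}|A \cap Q| = |A| - |A \cap(\text{leftover})| \ge |A| - dM$, an estimate depending on $A$ only through $|A|$; a double count over \emph{all} $M$-progressions in $[N]$ would instead lose badly when $A$ is concentrated near an endpoint of $[N]$, and avoiding that loss is the main structural idea.

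Next, for a fixed $d$, call $Q \in \mathcal{Q}_d$ \emph{rich} if $|A \cap Q| > r_3(M)$. Since $Q$ is order-isomorphic to $\{1,\dots,M\}$, a rich block contains a nontrivial three-term progression of $A$ whose common difference is a positive multiple of $d$. To count rich blocks: the non-rich ones contribute at most $r_3(M)$ each, and $|\mathcal{Q}_d| \le N/M$, so $|A| - dM \le \sum_Q |A \cap Q| \le r_3(M) N/M + M\cdot\#\{\text{rich }Q\}$, whence the number of rich blocks for this $d$ is at least $\bigl(|A| - dM - r_3(M)N/M\bigr)/M$. Summing over $1 \le d \le D$ gives a total of at least
\[
\frac{D}{M}\Bigl(|A| - \tfrac{M(D+1)}{2} - \tfrac{r_3(M)N}{M}\Bigr)
\]
rich blocks (counted together with their $d$); taking $D$ of order $N/M^2$ keeps the term $\tfrac{M(D+1)}{2}$ of lower order, so that it can be absorbed into the ``$+1$'' in $(r_3(M)+1)/M$, leaving a lower bound of order $\tfrac{N}{M^2}\cdot\tfrac{N}{M}\bigl(|A|/N - (r_3(M)+1)/M\bigr)$.

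Finally I convert rich blocks into three-term progressions of $A$. Each rich block furnishes at least one; conversely, a fixed nontrivial progression $\{a,a+e,a+2e\}$ (with $e \ge 1$) lies inside a block of $\mathcal{Q}_d$ only if $d \mid e$ and $2e \le (M-1)d$, i.e.\ only for divisors $d$ of $e$ with $e/d \le (M-1)/2$ — at most $(M-1)/2 < M/2$ values of $d$ — and for each such $d$ it lies in at most one block, by disjointness. Hence $T_3(A)$ is at least $\tfrac{2}{M}$ times the total number of rich blocks, and feeding in the estimate from the previous paragraph gives $T_3(A) \ge \bigl(|A|/N - (r_3(M)+1)/M\bigr)N^2/M^4$.

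I expect the only genuine obstacle to be bookkeeping: pinning down the integer $D \sim N/M^2$ so that, after accounting for the floor functions, the leftover losses, and the slack afforded by the ``$+1$'', the constant comes out to exactly $M^4$ rather than something slightly worse. This works cleanly when $N$ is somewhat larger than $M^2$; in the remaining range, where $M$ is of order at least $\sqrt N$, the right-hand side is less than $2$, so there the lemma reduces to the assertion that $|A|/N > (r_3(M)+1)/M$ forces a nontrivial three-term progression in $A$, which follows by partitioning $[N]$ into short blocks — these cases I would treat separately and briefly.
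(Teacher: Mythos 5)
Your argument is correct in outline and will deliver the stated bound, but it runs on a genuinely different decomposition from the paper's. Croot--Sisask average over the family $\mathcal{B}$ of \emph{all} $M$-term progressions in $[N]$ with common difference $d \le k$; they bound the multiplicity of a fixed 3AP inside $\mathcal{B}$ by $M^2/4$, and compensate by observing that each point of a trimmed ``inner interval'' $I_d = [(M-1)d+1,\,N-(M-1)d]$ is covered exactly $M$ times by $\mathcal{B}_d$, so that $\sum_{B\in\mathcal{B}}|A\cap B| \ge M\sum_{d\le k}|A\cap I_d|$. You instead pick, for each $d$, a \emph{partition} of (almost all of) $[N]$ into disjoint $M$-blocks by chopping residue classes mod $d$; the disjointness means the multiplicity of a fixed 3AP across all $\mathcal{Q}_d$ drops to $< M/2$, at the cost of the $M$-fold covering factor, and the two exactly trade off to give the same $N^2/M^4$. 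Your version is arguably cleaner: the bound $\sum_{Q\in\mathcal{Q}_d}|A\cap Q| \ge |A| - dM$ is immediate from disjointness and requires no inner-interval trim, and the multiplicity count ``$d \mid e$, $e/d \le (M-1)/2$, at most one block per such $d$'' is tidier than the paper's divisor count inside $\mathcal{B}$. The one place you are glossing is the same place the paper glosses: when $N$ is comparable to $M^2$ or smaller the parameter $D$ (resp.\ $k$) rounds to $0$ and the main calculation vacates, so the lemma needs the short separate argument you sketch for that range; be careful there, since $r_3(M)M$ need not be $<N$, so the naive pigeonhole into $\lceil N/M\rceil$ intervals does not quite close on its own and the cleanest fix is to observe that in that range the right-hand side is at most $1$ and then appeal directly to a single rich block from any one residue-class partition. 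With that patched, your proof is sound.
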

Before we prove this, let us see how we can use it to 
finish the proof of our main theorem. Set $M := \lfloor (\log p/\log\log p)^{1/16} \rfloor$ 
and apply the lemma to our set $T'$ to obtain the estimate
\[ 
\Lambda(T') \gg \frac{4|S|/3N - (r_3(M)+1)/M}{M^4}. 
\]
Comparing this to \eqref{LambdaT} (recalling that 
$\Lambda(T') \leq \Lambda(T)$), we conclude that
$$
r_3(N)/N = |S|/N \leq 3r_3(M)/4M + O( (\log\log N/\log N)^{1/4}).
$$
Thus $r_3(N)/N$ is asymptotically decreasing to $0$,
whence $\kappa = 0$.

\begin{proof}[Proof of Lemma \ref{quantitative_varnavides}]
The result will follow from an averaging procedure essentially
contained in \cite{croot:struct}. We include the proof here since our 
formulation is slightly different: we are working over $[N]$ rather than
$\F_p$, and so we have to take into account the inhomogeneity of $[N]$.

Let $k$ be a positive integer. Let $\mathcal{B}$ denote the collection 
of length $M$ arithmetic progressions contained in $[N]$ with common 
difference at most $k$, and let $\mathcal{B}_d$ denote the 
subcollection consisting of such arithmetic progressions with common 
difference $d$. Throughout this proof we restrict ourselves to progressions 
with positive common difference.

We first claim that any 3AP (three term progression) in $[N]$ can occur in 
at most $M^2/4$ progressions in $\mathcal{B}$. To see this, note that if a 
3AP has common difference $d$, then it can occur in at most $M-2$ 
progressions of length $M$ with common difference $d$. Similarly, the 3AP 
can occur in at most $M-2d/n$ $M$-APs with difference $n$ provided $n$ divides $d$ and
$n \geq 2d/(M-1)$, and in no other $M$-APs. Thus the 3AP can occur in no more than
\[ \sum_{1 \leq m \leq (M-1)/2} (M-2m) \leq M^2/4 \]
members of $\mathcal{B}$, as claimed. It follows immediately that 
\begin{align}\label{T3estimate}
 T_3(A) \geq \frac{4}{M^2} \sum_{B \in \mathcal{B}} T_3(A \cap B).
\end{align}
Now if $B$ is an arithmetic progression of length $M$ and $|A \cap B| > r_3(M)$, 
then by definition we have $T_3(A \cap B) \geq 1$. In view of \eqref{T3estimate} 
our aim shall therefore be to estimate the number of such sets $B$; we shall do this 
by looking at progressions of fixed common differences. Indeed, for a 
fixed common difference $d$, every element in the interval 
$I_d := [(M-1)d + 1, N-(M-1)d]$ is contained in precisely $M$ progressions 
in $\mathcal{B}_d$, and so
\[ 
\sum_{B \in \mathcal{B}} |A \cap B| = \sum_{d \leq k} \sum_{a \in A} \sum_{B \in \mathcal{B}_d} 1_B(a) \geq M \sum_{d \leq k} |A \cap I_d|.
\]
Since $|A \cap I_d| \geq |A| - 2(M-1)d$, this quantity is at least $M k(|A|-2 M k)$.
Now let $\mathcal{C} \subset \mathcal{B}$ be the
set of progressions $B$ for which $|A \cap B| > r_3(M)$. We then have 
\[ 
\sum_{B \in \mathcal{B}} |A \cap B| \leq M|\mathcal{C}| + r_3(M)|\mathcal{B}\setminus \mathcal{C}|,
\]
from which it follows that
\[ 
|\mathcal{C}| \geq k(|A| - 2 M k) - |\mathcal{B}|r_3(M)/M. 
\] 
Since $|\mathcal{B}_d| = N-(M-1)d$ for each $d$, the total number of 
progressions $|\mathcal{B}|$ is at most $N k$. Choosing 
$k = \lfloor N/2M^2 \rfloor$ 
we conclude that there must be at least
\[ 
|\mathcal{C}| \geq \left(\frac{|A|/N - r_3(M)/M - 1/M}{4 M^2}\right)N^2 
\]
sets $B$ for which $|A \cap B| > r_3(M)$. The result thus follows 
from \eqref{T3estimate}.
\end{proof}

\section{Acknowledgments}

We would like to thank Ben Green for pointing out that we can take 
$B$ to be a three term progression for our argument -- in a previous
draft we took $B$ to be a $20$-term arithmetic progression, due to 
a small inefficiency in one part of our proof.

\end{document}